\documentclass[reqno,oneside,12pt]{amsart}

\usepackage[T1]{fontenc}
\usepackage{times,mathptm}
\usepackage{amscd,color, enumerate}
\usepackage[svgnames]{xcolor}

\usepackage{amssymb}
\usepackage{mathtools}
\usepackage{stackrel}

\usepackage[colorlinks=true, pdfstartview=FitV, linkcolor=blue, citecolor=blue, urlcolor=blue]{hyperref}

\title[On a theorem of Ramanujam]{On Ramanujam's theorem about finite dimensional groups of automorphisms}

\author[Cantat et al]{Serge Cantat, Hanspeter Kraft, \\Andriy Regeta, Immanuel van Santen}

\address{Univ. Rennes, CNRS, IRMAR - UMR 6625,\newline\indent
35000 Rennes, Frances}
\email{serge.cantat@univ-rennes.fr}
\address{Departement Mathematik und Informatik,\newline\indent
Universit\"at Basel,  Spiegelgasse~1, CH-4051 Basel}
\email{hanspeter.kraft@unibas.ch}
\address{Dipartimento di Matematica ``Tullio Levi-Civita'',\newline\indent
Universit\`a di Padova, Via Trieste 63, I-35121 Padova}
\email{andriyregeta@gmail.com}
\address{Departement Mathematik und Statistik,\newline\indent
Universit\"at Bern, Sidlerstrasse 5, CH-3012 Bern}
\email{immanuel.van.santen@gmail.com}
\date{2024-2026}

\thanks{The work of S.C. is supported by  the European Research Council (ERC GOAT 101053021). The work of A.R. was partially supported by  DFG, project number 509752046.
The work of I.v.S. was supported by the SNSF grant 10000940.}

\theoremstyle{plain}
\newtheorem{thm}{Theorem}[section]

\newtheorem{ramthm}{Ramanujam's theorem\!\!}

\newtheorem{prop}[thm]{Proposition} 
\newtheorem{lem}[thm]{Lemma}
\newtheorem*{lem*}{Lemma}

\newtheorem{cor}[thm]{Corollary}

\theoremstyle{definition}

\newtheorem{exa}[thm]{Example}
\newtheorem{rem}[thm]{Remark}

\newcommand{\mm}{{\mathfrak m}}

\newcommand{\bbA}{{\mathbb A}}

\newcommand{\OOO}{\mathcal{O}}
\newcommand{\GGG}{\mathcal{G}}
\newcommand{\GGGr}[1]{\GGG\vert_{#1}}

\newcommand{\SSS}{\mathcal{S}}

\newcommand{\simto}{\xrightarrow{\sim}}
\newcommand{\be}{
\begin{enumerate}}
\newcommand{\ee}{
\end{enumerate}}

\renewcommand{\phi}{\varphi}

\DeclareMathOperator{\id}{id}

\DeclareMathOperator{\Aut}{Aut}

\DeclareMathOperator{\Spec}{Spec}

\DeclareMathOperator{\Ga}{\mathbf{G}_{a}}

\DeclareMathOperator{\Char}{char}

\DeclareMathOperator{\Lie}{Lie}

\DeclareMathOperator{\Ve}{Vec}

\DeclareMathOperator{\mdo}{{{\mathrm{mdo}}}}

\renewcommand{\subset}{\subseteq}
\newcommand{\kk}{\Bbbk}
\newcommand{\KK}{\mathbb K}

\newcommand{\Xm}{X_{\text{\rm{max}}}}

\renewcommand{\aa}{\mathfrak{a}}

\newcommand{\ps}{\par\smallskip}

\newcommand{\set}[2]{\left\{\,#1 \ | \ #2\,\right\}}

\addtocounter{section}{0} 
\numberwithin{equation}{subsection} 

\subjclass{Primary 14J50; Secondary 14L10, 14L30.}

\begin{document}

\begin{abstract}
Ramanujam's theorem states that any connected finite-di\-men\-sional subgroup of the automorphism group $\Aut(X)$ of an irreducible variety~$X$  is an algebraic group, 
in a natural way. In this note, we discuss the notion of dimension and extend  
Ramanujam's theorem to arbitrary (not necessarily irreducible) varieties.
\end{abstract}
 
\maketitle

\setcounter{tocdepth}{1}
\tableofcontents

\section{Introduction}
%
Let $X$ be an arbitrary (possibly reducible) 
variety over an algebraically closed field $\kk$. 
Denote by $\Aut(X)$ its group of algebraic automorphisms. 

\subsection{Morphisms, connectedness, and dimension} 

The following notion goes back to Ramanujam's article~\cite{Ra1964A-note-on-automorp}.

For any variety $A$, a \emph{morphism} $\rho \colon A \to \Aut(X)$ is a map such that the induced map $A \times X \to X$, $(a, x) \mapsto \rho(a)(x)$ is a morphism of varieties.  This is equivalent to  $(a, x) \mapsto (a, \rho(a)(x))$ being an automorphism of $A \times X$ (see Lemma~\ref{Lem.inverse-family}  in the Appendix). The image $V$ of a morphism $\rho \colon A \to \Aut(X)$ is a {\emph{constructible}} subset of $\Aut(X)$. If we can choose $A$ to be irreducible,
then $V$ is an {\emph{irreducible constructible}} subset.
 
A subgroup $\GGG \subseteq \Aut(X)$ is \emph{connected}
if for every $g \in \GGG$ there is an irreducible variety $A$ and a morphism 
$\rho \colon A \to \Aut(X)$ with image in $\GGG$ such that $\rho(A)$ contains both $g$
and the identity $\id_X$. 
The {\emph{connected component}} of a subgroup $\GGG \subseteq \Aut(X)$ is the subgroup $\GGG^\circ\subset\GGG$ generated, as an abstract group, by the images of all  irreducible subsets $V\subset \GGG$ containing $\id_X$.
 
\begin{rem}
 If $\rho\colon A\to \Aut(X)$ and $\varphi\colon B\to \Aut(X)$ are morphisms, then $\psi\colon A\times B\to \Aut(X)$, $(a,b)\mapsto \rho(a)\circ \phi(b)$ is a morphism, too. If $A$ and $B$ are irreducible, so is $A\times B$, and if  $\rho(A)$ and $\varphi(B)$ contain $\id_X$, then so does $\psi(A\times B)$.
 Thus, $\GGG$ is connected if and only if it coincides with $\GGG^\circ$.
\end{rem}

The \emph{dimension $\dim \GGG$} of a subgroup 
$\GGG \subseteq \Aut(X)$ is, as defined by Ramanujam, the supremum of $\dim(A)$ over all 
\emph{injective} morphisms $A \to \Aut(X)$ with image in $\GGG$, where $A$ is any irreducible variety.

\subsection{Ramanujam's theorem} 
In~\cite{Ra1964A-note-on-automorp}, Ramanujam proved the following theorem when the variety $X$ is  irreducible and the group $G$ is connected. 
 
\begin{ramthm}
\label{Thm.Ramanujam}
Let $X$ be a variety over an algebraically closed field~$\kk$. 
Let $G$ be a finite dimensional subgroup of $\Aut(X)$ such that $G^\circ$ has finite index in $G$.
Then, $G$ carries the structure of an 
algebraic group which is uniquely determined by the following universal property:
\be
\item[{\rm{(UP)}}] 
{The action $G\times X \to X$ is an action of the algebraic group $G$, and if $\mu\colon A \to \Aut(X)$ is a morphism such that $\mu(A) \subset G$, then the induced map 
$\mu\colon A \to G$ is a morphism of algebraic varieties.}
\ee
\end{ramthm}

The goal of this note is to explain why Ramanujam's original statement (for an irreducible $X$) implies the theorem above. To do this, we provide alternative characterizations of the dimension of a subgroup $\GGG\subset \Aut(X)$. 
The main reason why Ramanujam assumes  all his varieties to be irreducible, is because he relies on Galois theory for extensions of the field of functions of $X$ (resp. of $A$, where $A\to \Aut(X)$ is a morphism). So, we already departed from Ramanujam's viewpoint by not assuming $A$ to be irreducible in the above definitions.

\begin{rem} Assume $G \subseteq \Aut(X)$ is  a finite dimensional subgroup such that 
$G^\circ$ has finite index in $G$. 
Endow $G$ with the algebraic group structure provided by Ramanujam’s theorem. 
Then $G$ acts schematically faithfully on $X$. 
This implies that every trivially acting normal subgroup scheme 
$N \subseteq G$ is trivial. 
In fact, by Ramanujam's theorem, the identity factors as $G \to G/N \to G$.
\end{rem}

In order to illustrate this remark, we assume  $\Char(\kk) = p > 0$ and consider the following
two $\Ga$-actions on the line $\mathbb{A}^1$:
\[
(t, x) \mapsto x + t \quad \textrm{and} \quad (t, x) \mapsto x + t^p \, .
\]
Both actions have the same image in $\Aut(\bbA^1)$, namely the group $\mathsf T$ of translations, but only the former 
defines the algebraic group action
given by Ramanujam's theorem. The restriction of the latter to the non-reduced subgroup scheme $\Spec(\kk[t] / (t^p))$ is trivial, hence
$\Ga$ does not act schematically faithfully in this case.

\begin{cor}
Let $X$ be a variety. If $G  \subseteq \Aut(X)$ is a subgroup that is also a constructible subset, then $G$ has the structure of an algebraic group given by Ramanujam's theorem. If, moreover, $G$ is irreducible as a constructible subset, then it is connected as an algebraic group.
\end{cor}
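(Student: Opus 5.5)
We reduce to Ramanujam's theorem by checking its two hypotheses: that $G$ is finite dimensional, and that $G^\circ$ has finite index in $G$. Constructibility of $G$ means $G=\bigcup_{i=1}^n \rho_i(A_i)$ for finitely many morphisms $\rho_i\colon A_i\to\Aut(X)$ with each $A_i$ irreducible; the images are constructible, and a finite union of them can be arranged by decomposing each parameter variety into irreducible components. Write $V_i=\rho_i(A_i)$.

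\emph{Finite index.} Choose $g_i\in V_i$. Then $W_i=g_i^{-1}V_i$ is again the image of an irreducible variety, via $a\mapsto g_i^{-1}\circ\rho_i(a)$, which is a morphism by the Remark on products of morphisms applied with a constant morphism. Since $W_i$ contains $\id_X$ and lies in $G$, we get $W_i\subset G^\circ$, hence $V_i\subset g_iG^\circ$. Therefore $G$ is covered by the $n$ cosets $g_1G^\circ,\dots,g_nG^\circ$, and $[G:G^\circ]\le n$.

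\emph{Finite dimension.} Let $\mu\colon A\to\Aut(X)$ be injective with image in $G$ and $A$ irreducible. We want $\dim A\le \max_i \dim A_i$. The sets $\mu^{-1}(V_i)$ cover $A$. I would show that each is constructible in $A$, and then bound its dimension. For constructibility, consider the fibre products $A\times_{\Aut(X)}A_i=\{(a,b)\mid \mu(a)=\rho_i(b)\}$. This set is closed in $A\times A_i$, being the locus where the two morphisms $A\times A_i\times X\to X$, $(a,b,x)\mapsto\mu(a)(x)$ and $(a,b,x)\mapsto\rho_i(b)(x)$, agree for every $x$. Indeed it is an intersection over $x\in X$ of closed sets, since $X$ is separated. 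Because $\mu$ is injective, the projection of this closed set $Z_i$ to $A_i$ is injective. Hence $\dim Z_i\le\dim A_i$. Its projection to $A$ is $\mu^{-1}(V_i)$, which is constructible by Chevalley and has dimension at most $\dim Z_i$. Since $A$ is irreducible, one of these constructible sets is dense, so $\dim A\le\max_i\dim A_i<\infty$. This is the step requiring the most care: the closedness of the coincidence locus and the dimension count via injective projections. I expect it to go through with the argument above.

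\emph{Conclusion.} Ramanujam's theorem now endows $G$ with its algebraic group structure. If $G$ is irreducible as a constructible subset, we may take $n=1$ above. Then the index bound gives $G=G^\circ$, i.e.\ $G$ is connected in the abstract sense. It remains to see that the algebraic group $G$ is connected. By (UP), $\rho_1\colon A_1\to G$ is a morphism of varieties and is surjective, so $G$ is the image of an irreducible variety. Hence $G$ is irreducible as a variety, and therefore connected as an algebraic group.
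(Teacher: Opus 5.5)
Your proof is correct and follows the paper's route: bound $\dim G$ by the dimension of a parameter variety for $G$, then apply Ramanujam's theorem. The paper gets the bound $\dim G\le\dim A$ by citing Lemma~2.7 of \cite{KrReSa2021Is-the-Affine-Spac} and leaves the finite-index hypothesis and the connectedness assertion implicit; you instead prove the bound directly via the closed coincidence loci $Z_i$, and you check $[G:G^\circ]\le n$ and the irreducibility of $G$ via (UP) explicitly.
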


\begin{proof} If $G$ is the image of a morphism $\rho\colon A \to \Aut(X)$, then, by Lemma~2.7 of
\cite{KrReSa2021Is-the-Affine-Spac}, $\dim G \leq \dim A$, and the claim follows from Ramanujam's theorem. 
\end{proof}

\section{Preliminary remarks on morphisms and orbits}
\subsection{Orbits} The following lemma 
is proven for irreducible varieties in Lem\-ma~1 of~\cite{Ra1964A-note-on-automorp}.
The same argument applies verbatim to arbitrary varieties.
\begin{lem}
\label{Lem.Lemma1_Ramanujam}
Let $X$ be a variety and let $\rho \colon A \to \Aut(X)$ be a morphism.
Then there exist an integer $k\geq 1$ and a point $(x_1, \ldots, x_k)\in X^k$ such that 
$\rho(a)=\rho(a')$ if and only if $\rho(a)(x_i)=\rho(a')(x_i)$ for all $i$.
\hfil \qed
\end{lem}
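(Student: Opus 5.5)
The plan is a Noetherian induction on closed subsets of $A\times A$. For each finite tuple $\underline{x}=(x_1,\ldots,x_k)\in X^k$ set
\[
Z_{\underline{x}}=\set{(a,a')\in A\times A}{\rho(a)(x_i)=\rho(a')(x_i)\ \text{for all } i}.
\]
Since $\Phi\colon A\times X\to X$, $(a,x)\mapsto\rho(a)(x)$, is a morphism and $X$ is separated, each $Z_{\underline{x}}$ is closed in $A\times A$. It is the intersection over $i$ of the preimages of the diagonal $\Delta_X$ under $(a,a')\mapsto(\Phi(a,x_i),\Phi(a',x_i))$. Concatenating tuples intersects the corresponding sets, so the family $\{Z_{\underline{x}}\}$ is closed under finite intersections.

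Now use that $A\times A$ is Noetherian. The family of closed sets $Z_{\underline{x}}$ has a minimal element $Z_{\underline{x}_0}$. By minimality, $Z_{\underline{x}_0}\cap Z_{\underline{y}}=Z_{\underline{x}_0}$ for every tuple $\underline{y}$, i.e.\ $Z_{\underline{x}_0}\subset Z_{\underline{y}}$.

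It remains to identify $Z_{\underline{x}_0}$. Let $R=\set{(a,a')}{\rho(a)=\rho(a')}$. Since an automorphism of $X$ is determined by its values on points ($\kk$ is algebraically closed and $X$ is reduced), we have
\[
R=\bigcap_{x\in X}Z_{(x)}.
\]
Clearly $R\subset Z_{\underline{x}_0}$. Conversely, $Z_{\underline{x}_0}\subset Z_{(x)}$ for every $x$ by minimality, so $Z_{\underline{x}_0}\subset R$. Hence $Z_{\underline{x}_0}=R$, which is exactly the statement with $k$ the length of $\underline{x}_0$. (If $k=0$ were to occur, one can always pad the tuple with an arbitrary point so that $k\ge 1$.)

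No irreducibility of $X$ or $A$ is used anywhere, which is why the argument transfers verbatim. The only step requiring care is the closedness of $Z_{\underline{x}}$. That follows from separatedness of $X$ together with the fact that $\Phi$ is a morphism, which is precisely the definition of a morphism $A\to\Aut(X)$.
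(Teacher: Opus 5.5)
Your proof is correct and takes essentially the same approach as the paper. The paper gives no proof of its own: it cites Lemma~1 of Ramanujam and says the argument applies verbatim, and that argument is this descending-chain (Noetherian) argument on the closed subsets $\{(a,a') \mid \rho(a)x=\rho(a')x\}$ of $A\times A$, which, as you note, never uses irreducibility of $A$ or $X$.
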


\begin{rem}
   \label{Rem.Same_fibres}
   Lemma~\ref{Lem.Lemma1_Ramanujam} has the following immediate consequence:
   \textit{For any morphism $\rho\colon A \to \Aut(X)$ there is a morphism of varieties $\mu\colon A \to B$ with the same (reduced) fibers.}
\end{rem}

If an algebraic group $G$ acts on an algebraic variety, then we get a natural homomorphism
from the Lie algebra $\Lie(G)$ to the space of vector fields $\Ve(X)$ given by
\[
   \xi_G \colon \Lie(G) \to \Ve(X) \, , \quad v \mapsto \left(x \mapsto (\textrm{d}_e \rho_x )v \right)
\]
where $\rho_x \colon G \to X$ denotes the orbit map associated to $x$ and
$e \in G$ is the neutral element (we interpret the vector fields as sections of the tangent bundle of $X$).

\begin{lem}
   \label{Lem.Parametrization_G}
   Let $G$ be an algebraic group acting faithfully on $X$ such that
   $\xi_G \colon \Lie(G) \to \Ve(X)$ is injective.
   Then there
   exist an integer $k \geq 1$ and $z = (x_1, \ldots, x_k) \in X^k$ such that
   the orbit map $\rho_z \colon G \to Gz$ is an isomorphism.
\end{lem}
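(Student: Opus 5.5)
We argue in two stages: first make the orbit map injective, then make it separable with injective differential, and finally invoke the standard criterion that a bijective morphism onto a normal variety with everywhere injective differential is an isomorphism.

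\emph{Step 1 (injectivity).} Apply Lemma~\ref{Lem.Lemma1_Ramanujam} to the morphism $\rho\colon G\to \Aut(X)$ given by the action. Since the action is faithful, $\rho$ is injective. We get $x_1,\dots,x_k$ such that $g(x_i)=g'(x_i)$ for all $i$ forces $g=g'$. Thus the stabilizer $G_z$ of $z=(x_1,\dots,x_k)$ in $G$ is trivial as a set. Enlarging $z$ by further points keeps this property.

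\emph{Step 2 (schematic triviality of the stabilizer).} The orbit $Gz\subset X^k$ is a locally closed homogeneous variety, and $\rho_z\colon G\to Gz$ induces $G/G_z\simto Gz$, where $G_z$ is the scheme-theoretic stabilizer. Hence it suffices to arrange that $G_z$ is trivial as a group scheme. Since $(G_z)_{\mathrm{red}}$ is trivial, $G_z$ is infinitesimal, and it is trivial if and only if $\Lie(G_z)=\ker(\textrm{d}_e\rho_z)=0$. Now
\[
\textrm{d}_e\rho_z(v)=\bigl(\xi_G(v)(x_1),\dots,\xi_G(v)(x_k)\bigr).
\]
So $\ker \textrm{d}_e\rho_z=\bigcap_i \ker\bigl(v\mapsto \xi_G(v)(x_i)\bigr)$. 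Because $\xi_G$ is injective, for each nonzero $v$ the vector field $\xi_G(v)$ is nonzero, so it does not vanish at some point of $X$. Hence $\bigcap_{x\in X}\ker(v\mapsto\xi_G(v)(x))=0$. As $\Lie(G)$ is finite dimensional, finitely many points $y_1,\dots,y_m$ already give zero intersection. Append them to $z$. Then $\textrm{d}_e\rho_z$ is injective, $G_z$ is trivial, and $\rho_z\colon G\to Gz$ is an isomorphism.

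\emph{Main obstacle.} The delicate point is the identification $G/G_z\cong Gz$ for the scheme-theoretic stabilizer, together with the claim that an infinitesimal group scheme with zero Lie algebra is trivial. These facts are standard: orbits are homogeneous, and a finite local group scheme with trivial tangent space is trivial. I would cite them rather than reprove them. Should $\xi_G$ be defined via sections of the tangent sheaf on a reducible $X$, one must also check that nonvanishing of $\xi_G(v)$ as a section yields a point $x$ with $\xi_G(v)(x)\neq0$ in the Zariski tangent space $T_xX$. This holds because $\xi_G(v)(x)=\textrm{d}_e\rho_x(v)$ is defined pointwise, so injectivity of $\xi_G$ means exactly that for each $v\neq 0$ some $\textrm{d}_e\rho_x(v)$ is nonzero.
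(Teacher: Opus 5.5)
Your proof is correct and follows the paper's argument. You use Lemma~\ref{Lem.Lemma1_Ramanujam} to make $\rho_z$ injective, then use injectivity of $\xi_G$ and $\dim \Lie(G) < \infty$ to append finitely many points so that $\textrm{d}_e\rho_z$ is injective; the only difference is the final step, where the paper says $\rho_z$ is then smooth (hence, being bijective, an isomorphism), while you express the same standard fact through the infinitesimal stabilizer $G_z$ with zero Lie algebra being trivial, so that $G \cong G/G_z \cong Gz$.
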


\begin{proof}
   By Lemma~\ref{Lem.Lemma1_Ramanujam} we can choose $k \geq 1$ and 
   $z = (x_1, \ldots, x_k) \in X^k$ such that $\rho_z$ is injective.
   As $\xi_G$ is injective, we may extend the tuple $z$ 
   by adding finitely many further points so that
   $\Lie(G) \to T_z X^{k}$, 
   $v \mapsto ((\textrm{d}_e \rho_{x_1}) v, \ldots, (\textrm{d}_e \rho_{x_k}) v)$ is injective. 
   Hence, $\rho_z \colon G \to Gz$ is smooth and thus it is an isomorphism.
\end{proof}

Let $\GGG\subset \Aut(X)$ be a connected subgroup. 
Proposition~\ref{Orbits.prop}(1) below implies that each orbit
$\GGG x$ is locally closed in $X$, 
so $\dim \GGG x$ makes sense. 
We denote by $\mdo(\GGG;X)$ the  {\emph{maximal dimension of orbits}}:
\begin{equation*}
\mdo(\GGG;X)\coloneqq \max\{\dim \GGG x\mid x \in X\}.
\end{equation*}
Then,  $\Xm\subset X$ will denote the union  of orbits of dimension $\mdo(\GGG ; X)$.

A set $\SSS$ of subgroups of $\GGG$ is \emph{covering} if $\GGG = \cup_{G \in \SSS} G$. It
is \emph{directed} if, for any pair $G_1,G_2 \in \SSS$, there is a $G \in \SSS$ containing both.

\begin{prop}\label{Orbits.prop}
Let $X$ be a variety and $\GGG$ be a connected subgroup of~$\Aut(X)$. 
\begin{enumerate}[\rm (1)]
\item
The $\GGG$-orbits in $X$ are open in their closure. 
They are locally closed in $X$ and smooth.

\item \label{algebraic-subset.item}
The group $\GGG$ contains an irreducible constructible subset $V \subseteq \Aut(X)$
with $\id_X \in V$
such that $\GGG x = Vx$ for all $x \in X$.
Moreover, if $\rho \colon A \to \Aut(X)$ is a morphism such that $A$ is irreducible,
$\rho(A)$ contains $\id$, and $\rho(A)$ 
generates $\GGG$, then we can choose
$V = (\rho(A) \cdot \rho(A)^{-1})^m$ for some $m \geq 1$.
\item
The union $\Xm$ of orbits of maximal dimension is open in $X$.
\item If $\SSS$ is a covering and directed set of connected algebraic subgroups for $\GGG$, then there is $G \in \SSS$ such that $\GGG x = G x$ for all $x \in X$. 
\end{enumerate}
\end{prop}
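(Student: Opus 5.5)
The core of the argument is part~(2), which I would prove first. Parts (1) and (3) then follow by standard orbit-map arguments, and part (4) by a stabilization argument.

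\emph{Part (2).} Since $\GGG$ is connected, it is generated by images of irreducible morphisms $\rho\colon A\to\Aut(X)$ whose images contain $\id_X$. Products of such images are again images of irreducible morphisms by the Remark. So $\GGG$ is the increasing union of the irreducible constructible sets
\[
V_m=(\rho_1(A_1)\cdots\rho_r(A_r)\,\rho_r(A_r)^{-1}\cdots)^m .
\]
In the case of a single generating family $\rho$, these are $V_m=(\rho(A)\rho(A)^{-1})^m$. Each $V_m$ is the image of an irreducible variety $A_m$ by a morphism, and it contains $\id_X$.
\begin{itemize}
\item Consider the morphisms $\Phi_m\colon A_m\times X\to X\times X$, $(a,x)\mapsto(x,\rho(a)x)$. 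Their images $Z_m$ are constructible and increasing in $m$.
\item The union of the $Z_m$ is the orbit relation $R=\{(x,gx) : g\in\GGG\}$.
\item The closures $\overline{Z_m}$ form an increasing chain of closed subsets of $X\times X$, so by Noetherianity they stabilize at some $m_0$.
\item To get equality $Z_m=R$ rather than just equal closures, I would use the doubling trick: $Z_{2m}\supseteq Z_m\circ Z_m$. For each $x$, the fibre $V_mx$ is constructible, irreducible, and dense in $\overline{\GGG x}$ for $m\ge m_0$, where I work fibrewise and use generic fibre dimensions.
\item Then $V_mx$ contains a dense open subset $U$ of $\overline{V_mx}$. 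For any $y\in \GGG x$, both $U$ and $gU$ (with $g\in V_m$ such that $gx$ lies in $U$ near $y$) are dense open in the same irreducible closure, so they intersect. This gives $y\in V_m^{-1}V_mx\subseteq V_{2m}x$.
\end{itemize}
The key point is that $m_0$ must be uniform in $x$. I would get this by stratifying $X$ into finitely many locally closed pieces over which $\dim V_mx$ behaves constructibly (Chevalley, applied to $\Phi_m$). A uniform $m$ then works; taking $V=V_{2m}$ gives $\GGG x=Vx$. I expect this uniformity to be the main obstacle. The fallback is that $\dim\overline{V_mx}\le \dim A_m$ is bounded in terms of $\dim X$, so the fibre dimensions, as constructible functions, can increase only finitely often, and the strata stabilize.

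\emph{Part (1).} With $V$ from (2), the orbit $\GGG x=Vx$ is the image of the irreducible variety $A$ under the orbit map, so it is constructible. It therefore contains a dense open subset $U$ of $\overline{\GGG x}$. Translates $gU$ for $g\in\GGG$ cover $\GGG x$, and each $g$ preserves $\overline{\GGG x}$. Hence $\GGG x$ is open in its closure, so it is locally closed. It is smooth because it is homogeneous under $\GGG$ and contains smooth points.

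\emph{Part (3).} The function $x\mapsto\dim Vx$ is the fibre dimension of the morphism $A\times X\to X\times X$ over the first factor, shifted by the dimension of the stabilizer-type fibres. It is lower semicontinuous by upper semicontinuity of fibre dimensions for $(a,x)\mapsto (x,\rho(a)x)$. Hence the locus where it takes its maximum is open.

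\emph{Part (4).} Take $V$ as in (2), written as the image of an irreducible $A\to\GGG$. I would first try to show that $V$ lies in finitely many $G\in\SSS$, hence, by directedness, in a single one. Covering alone does not give this, since $A$ could be uncountably covered. So I would instead argue by dimensions. Choose $G\in\SSS$ maximizing the pair $(\mdo(G;X),\ \#\{\text{components of } X_{\max}\text{-strata}\})$, or more robustly choose $G$ maximizing $\sum$ of the dimensions of the closures of the orbit relation $\overline{\{(x,gx)\}}\subseteq X\times X$; this is bounded by $2\dim X$ and by the number of components. For any $G'\supseteq G$ in $\SSS$ the closures of the orbit relations then coincide. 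Applying the argument of (2) to $G'$, every $G'$-orbit equals the corresponding $G$-orbit. Since $\SSS$ is directed and covering, every $g\in\GGG$ lies in some $G'\supseteq G$, so $\GGG x=Gx$ for all $x$.
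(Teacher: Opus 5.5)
\textbf{The main gap is in (4), and the same root problem affects (2).} In both parts you control orbits through the closure of the orbit relation $\{(x,gx)\}\subseteq X\times X$, but that closure only sees generic orbits.

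In (4) this makes the argument false as stated. Let $\GGG=\mathrm{GL}_2$ act linearly on $\bbA^2$ (coordinates $(u,v)$), let $B$ be the upper triangular subgroup, and take $\SSS=\{B,\mathrm{GL}_2\}$, which is covering and directed. Since $B$ has the dense orbit $\{v\neq 0\}$, both orbit relations have closure $\bbA^2\times\bbA^2$. So $G=B$ maximizes your invariant, and the closures agree for $G'=\mathrm{GL}_2\supseteq G$. Yet $B\cdot(1,0)=\{(u,0) : u\neq 0\}\neq\mathrm{GL}_2\cdot(1,0)$.

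The invariant that works is the orbit-dimension function. The map $x\mapsto\dim Gx$ is lower semicontinuous, so the sets $\{x : \dim Gx\geq k\}$ are open and grow with $G$. Open sets of a Noetherian space satisfy the ascending chain condition, so directedness gives $G\in\SSS$ with $\dim G'x=\dim Gx$ for all $x$ and all $G'\supseteq G$ in $\SSS$. For $y\in G'x$, the orbits $Gx$ and $Gy$ are then both open and dense in the irreducible $G'x$, hence meet, so $Gx=G'x$. Covering plus directedness then give $\GGG x=Gx$. The paper runs the same idea through the minimal-dimensional locus $Z_{G_0}=\overline{\{x:\dim G_0x<\dim\GGG x\}}$, after reducing to $X$ irreducible with $X=\Xm$.

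In (2) you correctly name uniformity in $x$ as the crux, but none of your justifications delivers it.
\begin{itemize}
\item Stabilization of $\overline{Z_m}$ only gives $\overline{V_mx}=\overline{\GGG x}$ for generic $x$.
\item Noetherianity gives the descending, not the ascending, chain condition on closed sets. Your $\overline{Z_m}$ do stabilize, but only because they have boundedly many irreducible components, each of which grows.
\item A bounded increasing sequence of constructible functions need not stabilize: take the indicator functions of $\{1,\dots,m\}\subset\bbA^1$.
\end{itemize}
The fix is again lower semicontinuity. Your argument in (3) works for any irreducible family, so the open sets $\{x:\dim V_mx\geq k\}$ increase with $m$ and stabilize. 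After that, $\overline{V_mx}=\overline{\GGG x}$ holds for all $x$ simultaneously. The doubling trick must then intersect dense open subsets of $V_mx$ and $V_my$; intersecting $U$ with $gU$ does not yield $y\in V_m^{-1}V_mx$.

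Your first step in (2) also fails for general $\GGG$: a connected subgroup need not be generated by finitely many families. In $\Aut(\bbA^2)$ every family has bounded degree, and by the amalgamated product structure the automorphisms of degree $\leq d$ generate a subgroup missing $(x,y+x^{d+1})$. You must instead choose one family that maximizes these open sets within the directed system of all families through $\id_X$.

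For comparison, the paper does not prove (2) itself; it cites Popov's Theorem~1. It proves (1) pointwise, by stabilizing the irreducible closures $\overline{\rho(A)x}$ for fixed $x$, so (1) needs no uniformity and does not depend on (2). Its proof of (3) is the semicontinuity argument you sketch.
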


\begin{proof}
(a) For irreducible varieties, Assertion (1) is part of Lemma~2 of \cite{Ra1964A-note-on-automorp}.
For completeness we insert a proof in the general case. Let $x \in X$.
For $i=1,2$, let $\rho_i \colon A_i \to \Aut(X)$ be morphisms with $\rho_i(A_i) \subset \GGG$,  $\id_X \in \rho_i(A_i)$, and
 $A_i$  irreducible. The map
\[
A_1 \times A_2 \to \Aut(X), \quad (a_1,a_2) \mapsto \rho_1(a_1)\circ\rho_2(a_2)
\]
is again a morphism into $\GGG$ whose image contains both $\rho_1(A_1)$ and $\rho_2(A_2)$. 
By Chevalley's constructibility theorem the sets $\rho(A_i)x$ are constructible. Thus, iterating this construction and using the connectedness of $\GGG$, 
we obtain a morphism $\rho \colon A \to \Aut(X)$ with
$A$ irreducible, $\rho(A)\subset \GGG$, and $\overline{\rho(A)x} = \overline{\GGG x}$. The set $\rho(A)x\subset X$ is constructible and  contained in $\GGG x$, so it contains a dense open subset $U$ of $\overline{\GGG x}$. 
Thus $\GGG x = \GGG U$ is open and dense in~$\overline{\GGG x}$.
\ps
(b) Assertion (2) is given in Theorem~1 of \cite{Po2014On-infinite-dimens}. (The statement assumes  $X$ to be irreducible, but the definitions, given in \S 2 of   \cite{Po2014On-infinite-dimens}, do not use this fact and the general case follows from the irreducible case.)
\ps
(c) Now, we prove Assertion~(3) (see also Theorem~2 of~\cite{Po2014On-infinite-dimens}). Let $\rho \colon A \to \Aut(X)$ be a morphism
where $A$ is an irreducible variety and
$V \coloneqq \rho(A) \subseteq \Aut(X)$ satisfies Assertion (2).
Set $m = \mdo(\GGG; X)$. For $x \in X$, set $ax:=\rho(a)x$,
and consider the surjection 
$\eta_x \colon A \to A x$, $a \mapsto ax$. The local dimension of the fiber of $\eta_x$ satisfies
\begin{equation}\label{eq:locest}
\tag{$\ast$}
\dim_a (\eta_x^{-1}(ax)) \geq \dim A - \dim A x  \geq \dim A - m 
\end{equation}
and for a general $a \in A$, the first inequality is in fact an equality. Now, consider
the map $ \varphi \colon A \times X \to X \times X$, $(a, x) \mapsto (ax, x)$.
Then, $\varphi^{-1}(\varphi(a, x)) = \eta_x^{-1}(ax) \times \{ x \}$ for all 
$(a, x) \in A \times X$.
By Chevalley's semi-continuity theorem (see Thé\-o\-rème~13.1.3 in \cite{Gr1966Elements-de-geomet-28}), applied to $\varphi$,  the subset 
$U \subseteq A \times X$ of points $(a, x)$ with $\dim_a \eta_x^{-1}(ax) = \dim A - m$
is open in $A \times X$. From the estimate~\eqref{eq:locest}, 
we see that $(a,x)\in U$ 
 if and only if $x\in \Xm$ and $\dim_a \eta_x^{-1}(ax)= \dim A - \dim Ax$; thus 
\begin{equation*}
U= \{(a, x) \in A \times \Xm \mid \dim_a \eta_x^{-1}(ax) = \dim A - \dim Ax \}.
\end{equation*} 
For any $x \in X$, we have
$\dim_a \eta_x^{-1}(ax) = \dim A - \dim Ax$ for a general $a \in A$. Therefore,
$\Xm$ is the projection of $U$ to $X$, and the statement follows.
\ps
(d) Finally, we prove Assertion~(4). 
  Since $\SSS$ is directed and since the subgroups $G \in  \SSS$ are connected,
  we may argue componentwise and thus assume that $X$ is irreducible. By induction on $\dim X$, we may further assume that $X=\Xm$. 
  Again, we set $m = \mdo(\GGG;X)$.

  We claim that there exists $G \in \SSS$ with
  $\dim Gx = m$ for all $x \in X$. 
  Indeed, otherwise we can fix  $G_0 \in \SSS$
  such that
  \[
      Z_{G_0} \coloneqq \overline{\set{x \in X}{ \dim G_0 x < \dim \GGG x}} \subset X
  \] 
  is non-empty and of minimal dimension. 
  Then, arguing as in (a) and using that $\SSS$ is covering and directed, there exists $H \in \SSS$ containing $G_0$, such that
  each irreducible component of $Z_{G_0}$ contains a point $x$ with $\dim H x = m$.
 Since
  the $H$-orbits of maximal dimension $m$ form an open subset of $X$, we get $\dim Z_H < \dim Z_{G_0}$ or $Z_H=\emptyset$. 
  Both cases  contradict the choice of $G_0$, which proves the claim.

  Hence, $G x$ is dense in $\GGG x$ for all $x \in X$, which implies the statement.
\end{proof}

\subsection{Reparametrisation and dimension} 

\begin{prop}
\label{Pro.Injective_morphisms}
Given any morphism $\rho \colon A \to \Aut(X)$ from an irreducible variety $A$, 
there is an irreducible variety $B$ and an injective morphism $\mu \colon B \to \Aut(X)$ such that $\mu(B) \subseteq \rho(A)$ 
and $\rho^{-1}(\mu(B))$ is open and dense in $A$.
\end{prop}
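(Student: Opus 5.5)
We use Remark~\ref{Rem.Same_fibres} to replace $\rho$ by a morphism of varieties and then take a quotient-like open piece of the image. By Lemma~\ref{Lem.Lemma1_Ramanujam} there is a point $z=(x_1,\ldots,x_k)\in X^k$ such that $\rho(a)=\rho(a')$ if and only if $\rho(a)z=\rho(a')z$. Consider the orbit map
\[
\eta\colon A\to X^k,\qquad a\mapsto \rho(a)z .
\]
This is a morphism of varieties, and its fibers are exactly the fibers of $\rho$. Let $C=\overline{\eta(A)}$, which is irreducible. By Chevalley, $\eta(A)$ contains a dense open subset of $C$. Shrinking further, we take an open dense $B_0\subseteq C$ contained in $\eta(A)$ over which $\eta$ is flat (generic flatness), or at least equidimensional.

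The key difficulty is that a point $b\in B_0$ determines $\rho(a)$ for any $a\in\eta^{-1}(b)$, but $b\mapsto\rho(a)$ need not be a morphism $B_0\to\Aut(X)$. We need the map $B_0\times X\to X$, $(b,x)\mapsto \rho(a)x$, to be a morphism. This is the main obstacle, and it is especially hard in positive characteristic, where $\eta$ may be inseparable.

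First I would try to find a rational section. Replace $B_0$ by a suitable variety $B$ with a morphism $\sigma\colon B\to A$ such that $\eta\circ\sigma$ is injective with open image. Concretely:
\begin{itemize}
\item[(i)] Take a general fiber $F$ of $\eta$ of dimension $d=\dim A-\dim C$.
\item[(ii)] Choose a closed irreducible subvariety $A'\subseteq A$, cut out by $d$ general hypersurface sections through a general point of $A$, so that $\eta|_{A'}\colon A'\to C$ is dominant and generically finite.
\item[(iii)] Then $\eta|_{A'}$ is quasi-finite over an open $C'\subseteq C$.
\end{itemize}
The map $\rho|_{A'}$ is a morphism to $\Aut(X)$ whose fibers are the fibers of $\eta|_{A'}$, which are finite. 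Over an open set these fibers have constant cardinality $n$.

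To obtain injectivity, I would replace $A'$ by the open subset where things are well behaved and pass to a quotient by the finite equivalence relation $\{\rho(a)=\rho(a')\}$. Alternatively, and more concretely, we can use $\rho|_{A'}$ followed by a purely inseparable issue.

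If $n>1$, I would try the following. Since $\eta|_{A'}$ is generically finite, an open subset $A''$ of $A'$ is finite étale (after separable part) onto its image, up to a purely inseparable factor. Pass to the normalization of $C'$ in the function field of $A'$, truncated to its separable part. On the separable quotient $B$ of the finite map, the morphism descends because of the following fact. Given a finite morphism $A''\to B$ that is a geometric quotient by the equivalence relation, and a family on $A''$ constant on fibers, the family descends to a morphism $B\times X\to X$ provided $A''\times X\to B\times X$ is a categorical quotient. This holds for finite flat quotients by the fppf/étale descent argument, since $(a,x)\mapsto\rho(a)x$ is constant on fibers.

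Here $B$ is irreducible. The induced $\mu\colon B\to\Aut(X)$ is injective, and $\mu(B)=\rho(A'')\subseteq\rho(A)$. Finally, $\rho^{-1}(\mu(B))=\eta^{-1}(\eta(A''))$ contains $\eta^{-1}$ of an open dense subset of $C$. After shrinking $B$ to the preimage of a common open set, this preimage is open dense in $A$.

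So the steps in order are: Lemma~\ref{Lem.Lemma1_Ramanujam} and the orbit map $\eta$; a generically finite slice $A'$; descent of the family along the finite quotient of $A'$. The last step, the descent in the presence of inseparability, is where I expect the real work.
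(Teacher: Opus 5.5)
Your set-up matches the paper's: the orbit map $\eta$ from Lemma~\ref{Lem.Lemma1_Ramanujam}, the generically finite slice $A'$, shrinking to a finite map $A''\to C'$, and the final density argument. But the step that carries the proposition, constructing $B$ and descending the family, has a genuine gap, as you suspect.

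First, the $B$ you describe does not work. If $B$ is the normalization of $C'$ in the separable closure of $\kk(C)$ in $\kk(A')$, then $A''\to B$ is purely inseparable, hence generically bijective, while the general fibres of $B\to C'$ have $n$ points. So $A''\to B$ is not the quotient by $\rho(a)=\rho(a')$, and any descended $\mu$ would still be $n$-to-one. If you instead mean a factorization with $A''\to B$ generically \'etale and $B\to C'$ purely inseparable, you need a field $\kk(C)\subseteq M\subseteq\kk(A')$ with $M/\kk(C)$ purely inseparable and $\kk(A')/M$ separable. Such an $M$ need not exist when $\kk(A')/\kk(C)$ is not normal.

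Second, the descent principle you invoke is false. Fppf descent needs the two pullbacks to agree on the scheme-theoretic fibre product $A''\times_B A''$, which is non-reduced when $A''\to B$ is inseparable. ``Constant on fibres'' only gives agreement on its reduction. For instance, Frobenius $\pi\colon\bbA^1\to\bbA^1$, $t\mapsto t^p$, is finite flat with one-point fibres, and $\rho(t)=(x\mapsto x+t)$ is trivially constant on them. Yet no morphism $\mu$ satisfies $\mu\circ\pi=\rho$, since it would have to be $x\mapsto x+s^{1/p}$.

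The missing idea is the paper's passage to a normal closure:
\begin{enumerate}
\item Take a normal extension $\KK$ of $\kk(C)$ containing $\kk(A')$. Let $\tilde A$ be the normalization of $A''$ in $\KK$, and let $\Gamma=\Aut(\KK/\kk(C))$. This finite group acts on $\tilde A$ over $C'$; note it does not act on $A''$ itself.
\item The pulled-back family is $\Gamma$-invariant, because $\Gamma$ acts over $C'$ and $\rho$ is constant on the fibres of $\eta$.
\item The map $\tilde A\times X\to(\tilde A/\Gamma)\times X$ is a categorical quotient by a finite group. So the family descends to $\mu\colon\tilde A/\Gamma\to\Aut(X)$ with no flatness or separability hypothesis.
\item Since $\KK$ is normal over $\kk(C)$, the fixed field $\KK^\Gamma$ is purely inseparable over $\kk(C)$. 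Hence $\tilde A/\Gamma\to C'$ is bijective after shrinking, and this bijectivity is all that injectivity of $\mu$ requires.
\end{enumerate}
The point is that the inseparability lands in the harmless map $B\to C'$, not in the map along which you descend.

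Your route could also be repaired without the normal closure. Take $B$ to be the normalization of $C'$ in the purely inseparable closure of $\kk(C)$ in $\kk(A')$, after shrinking so that $A''$ is normal. Then descend using normality of $B$ rather than flatness: a function constant on fibres has a single conjugate over that field, so it lies in it and hence in $\mathcal{O}(B)$. No such argument appears in your proposal.
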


\begin{proof}
We may (and we do) assume from the outset that $A$ is affine.

Lemma~\ref{Lem.Lemma1_Ramanujam} and Remark~\ref{Rem.Same_fibres} provide a dominant morphism  
$\phi \colon A \to Z\subset X^k$ 
such that the fibres of $\varphi$ coincide with the fibres of $\rho$.
Replacing $A$ by a suitable closed and irreducible
subset we can assume that $\varphi$ has finite degree. Replacing $A$ by a suitable open dense subset we can further assume that $U \coloneqq \varphi(A)\subset Z$ is  open and dense in $Z$ and that $\varphi \colon A \to U$ is a finite morphism. 

 Let $\kk(A)$ and $\kk(U)$ be the fields of rational functions on $A$ and $U$.
Let $\KK/\kk(U)$ be a normal extension which contains $\kk(A)$, and let $\tilde{A}$ be the normalization of $A$ in $\KK$. Then $\eta \colon \tilde{A} \to A$ is a finite morphism.

If $\Gamma$ denotes the Galois group of $\KK/\kk(U)$,  then $\Gamma$ acts on $\tilde{A}$, and we get a factorization of $\varphi \circ \eta \colon \tilde{A} \to U$ as $\tilde{A} \to \tilde{A}/\Gamma \to U$.
Since $\kk(U) \subseteq \kk(\tilde{A}/\Gamma)$
is purely inseparable (see Proposition~6.11 in Chapter V of \cite{La2002Algebra}),  
after possibly shrinking $U$ we may in addition assume that $\tilde{A}/\Gamma \to U$ is bijective.   
The $\tilde{A}$-automorphism of $\tilde{A} \times X$
induced by $\rho \circ \eta$ descends to an $\tilde{A}/\Gamma$-automorphism of $(\tilde{A}/\Gamma) \times X$
and hence yields a morphism $\mu \colon \tilde{A} / \Gamma \to \Aut(X)$. By construction,
the fibres of $\mu$ are exactly the fibres of $\tilde{A}/\Gamma \to U$, and hence $\mu$ is injective.
Thus our claim follows by setting $B:=\tilde{A}/\Gamma$.
\end{proof}

Applying Proposition~\ref{Pro.Injective_morphisms} inductively  yields the following corollary.
\begin{cor}
\label{Cor.Existence_injective_morphism}
Let $\rho \colon A \to \Aut(X)$ be a morphism. Then there exists a variety $B$ and an injective morphism $\mu \colon B \to \Aut(X)$ with the same image as $\rho$. 
\hfill \qed
\end{cor}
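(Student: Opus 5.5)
We argue by Noetherian induction on $A$, applying Proposition~\ref{Pro.Injective_morphisms} repeatedly and gluing the resulting pieces as a disjoint union. The plan is to cut $A$ into finitely many locally closed irreducible pieces whose images in $\Aut(X)$ are pairwise disjoint, and to find an injective parametrization of the image of each piece.

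First we may assume that $A$ is nonempty. Replacing $A$ by the disjoint union of its irreducible components does not change the image, so the image is a finite union of images of irreducible pieces. Take an irreducible component $A_1$ and apply Proposition~\ref{Pro.Injective_morphisms} to $\rho|_{A_1}$. This gives an irreducible $B_1$ and an injective morphism $\mu_1 \colon B_1 \to \Aut(X)$ with $\mu_1(B_1)\subseteq \rho(A_1)$. Moreover, $\rho|_{A_1}^{-1}(\mu_1(B_1))$ is open and dense in $A_1$.

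Next set $W_1 := \rho^{-1}(\mu_1(B_1))\subseteq A$. This is the key point: $W_1$ is saturated for the fibres of $\rho$, so removing it from $A$ also removes the points $\mu_1(B_1)$ from the image. We still need $W_1$ to be constructible, and its complement $A\setminus W_1$ to have strictly fewer components of top dimension in $A_1$, or at least to be a proper constructible subset with smaller dimension on $A_1$.

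Constructibility follows from Remark~\ref{Rem.Same_fibres}. There is a morphism of varieties $\varphi\colon A\to X^k$ whose fibres are exactly the fibres of $\rho$. Then $W_1=\varphi^{-1}(\varphi(\rho|_{A_1}^{-1}\mu_1(B_1)))$, which is constructible by Chevalley's theorem, since the open set $\rho|_{A_1}^{-1}(\mu_1(B_1))$ is constructible.

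The complement $A' := A\setminus W_1$ is then a constructible set, hence a finite disjoint union of locally closed subvarieties. Its intersection with $A_1$ lies in a proper closed subset of $A_1$. Moreover, $\rho(A')=\rho(A)\setminus \mu_1(B_1)$.

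We then continue with $A'$. The induction measure is the multiset of dimensions of the irreducible components, which strictly decreases at each step. So the process terminates after finitely many steps, producing injective morphisms $\mu_i\colon B_i\to\Aut(X)$ with pairwise disjoint images whose union is $\rho(A)$.

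Finally set $B:=\bigsqcup_i B_i$ and $\mu:=\bigsqcup\mu_i$. This $\mu$ is a morphism, since the map $B\times X\to X$ is a morphism on each piece. It is injective because the images are disjoint and each $\mu_i$ is injective, and it has image $\rho(A)$.

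The main obstacle is the bookkeeping in the middle step: ensuring that the removed set is the full $\rho$-saturation and remains constructible. This is exactly where the morphism $\varphi$ with the same fibres as $\rho$, from Lemma~\ref{Lem.Lemma1_Ramanujam} and Remark~\ref{Rem.Same_fibres}, is needed.
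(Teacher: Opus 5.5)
Your strategy is the one the paper's one-line proof (``applying Proposition~\ref{Pro.Injective_morphisms} inductively'') has in mind, and most of your bookkeeping is right:
\begin{itemize}
\item removing the full saturation $W_1=\rho^{-1}(\mu_1(B_1))$;
\item its constructibility via $W_1=\varphi^{-1}(\varphi(W_1\cap A_1))$, with $\varphi$ coming from Lemma~\ref{Lem.Lemma1_Ramanujam};
\item the identity $\rho(A')=\rho(A)\setminus\mu_1(B_1)$;
\item the final disjoint union.
\end{itemize}
The gap is termination. With $A_1$ an \emph{arbitrary} component, the multiset of dimensions of the irreducible components does not decrease in general. Removing the saturation of a small component can cut a large component into new pieces of dimension bigger than $\dim A_1$, because $\rho$ may have positive-dimensional fibres there.

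Here is an example. Take $X=\bbA^2$ and $A=\bbA^1\sqcup\bbA^3$. Let $\rho(t)$ be translation by $(t,0)$, let $\rho(a,b,c)$ be translation by $(a,b)$, and take $A_1=\bbA^1$. The Proposition only promises that $W_1\cap A_1$ is open dense, so it may be $\bbA^1\setminus\{0\}$. Then $W_1\cap\bbA^3=\{b=0,\ a\neq 0\}$, and
\[A'=\{0\}\sqcup\{b\neq0\}\sqcup\{a=b=0\}.\]
The line $\{a=b=0\}$ cannot be merged with $\{b\neq 0\}$ into a locally closed piece. So the multiset passes from $\{1,3\}$ to $\{0,1,3\}$, which is larger.

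Worse, you allow an arbitrary decomposition of $A'$ into locally closed pieces. So nothing stops you from repeatedly splitting a point $\{p\}$ off a curve piece and choosing it as the next $A_1$. As written, the procedure need not terminate at all.

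The fix is easy: always choose $A_1$ to be a component of maximal dimension $d$. Then:
\begin{itemize}
\item no piece of $A'$ is dense in $A_1$, since such a piece would be open in $A_1$ and hence meet $W_1$;
\item each other component $A_j$ contains at most one piece dense in it, since two such pieces would be disjoint nonempty open subsets of $A_j$, and that piece has dimension $\dim A_j$;
\item every remaining piece lies non-densely in some component, so it has dimension $<d$.
\end{itemize}
Hence the multiset strictly decreases in the (well-founded) multiset order.

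Alternatively, run a genuine Noetherian induction inside the original $A$, as your first sentence suggests. Keep the remaining set $R$ as a $\rho$-saturated constructible subset of $A$. Pick an irreducible component $Z$ of $\overline{R}$, and apply the Proposition to a dense open subset of $Z$ contained in $R$. Then $\overline{R\setminus W}$ no longer contains $Z$, so the closed sets $\overline{R}$ strictly decrease.
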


In this corollary the variety $B$ is in general not irreducible, even when $A$ is assumed to be irreducible. 

So, if a  subset  $S$ in $\Aut(X)$ is finite dimensional, of dimension $d$, then for every morphism
$A\to \Aut(X)$ with image in $S$, one can find an injective  morphism $B\to \Aut(X)$ with the same image and $\dim(B)\leq d$. {\emph{This gives an
alternative definition of the dimension}}, as the minimal such $d$.

\section{Alternative characterization of dimension for groups}

\begin{prop}\label{Prop.Alternative_definition_dimension}
Let $X$ be a variety, and let $\GGG \subseteq \Aut(X)$ be a connected subgroup. 
Then $\dim\GGG=\sup\{\dim \GGG y \mid y \in X^n, n \geq 1\}$.
\end{prop}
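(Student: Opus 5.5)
We prove the two inequalities separately. Write $d=\dim\GGG$ (possibly infinite) and $s=\sup\{\dim\GGG y\mid y\in X^n,\ n\ge 1\}$. Here $\GGG$ acts diagonally on $X^n$, and the image of $\GGG$ in $\Aut(X^n)$ is again a connected subgroup. So by Proposition~\ref{Orbits.prop}(1) each orbit $\GGG y$ is locally closed in $X^n$, and its dimension makes sense.

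\emph{Step 1: $s\le d$.} Fix $y\in X^n$ and apply Proposition~\ref{Orbits.prop}(2) to the diagonal action on $X^n$. This gives an irreducible $A$ and a morphism $\rho\colon A\to\Aut(X)$ with image in $\GGG$ such that $\rho(A)y=\GGG y$. The orbit map $\eta_y\colon A\to \GGG y\subset X^n$, $a\mapsto \rho(a)y$, is a morphism of varieties. Proposition~\ref{Pro.Injective_morphisms} gives an irreducible $B$ and an injective $\mu\colon B\to\Aut(X)$ with $\mu(B)\subseteq\rho(A)$ and $\rho^{-1}(\mu(B))$ dense open in $A$. Then $\mu(B)y$ contains $\eta_y$ of a dense open subset of $A$, so it is dense in $\GGG y$. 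The map $B\to X^n$, $b\mapsto \mu(b)y$, is dominant onto $\overline{\GGG y}$, hence
\[
\dim \GGG y\le \dim B\le d .
\]
This works equally when $d=\infty$.

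\emph{Step 2: $d\le s$.} Let $\mu\colon A\to\Aut(X)$ be injective with $A$ irreducible and image in $\GGG$. Lemma~\ref{Lem.Lemma1_Ramanujam} gives $z=(x_1,\dots,x_k)\in X^k$ such that $a\mapsto \mu(a)z$ is injective, since $\mu$ is injective. An injective morphism $A\to X^k$ has finite fibres, so $\dim \overline{\mu(A)z}=\dim A$ by the fibre dimension theorem. Since $\mu(A)z\subseteq\GGG z$, we get $\dim A\le\dim\GGG z\le s$. Taking the supremum over all such $\mu$ gives $d\le s$.

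\emph{Main obstacle.} Both steps are essentially formal. The delicate point is in Step 1: checking that Proposition~\ref{Orbits.prop}(2) and Proposition~\ref{Pro.Injective_morphisms} may be applied to the diagonal action on $X^n$, and that the supremum defining $\dim\GGG$ really is attained by an irreducible injective parametrization whose orbit through $y$ is dense. Proposition~\ref{Pro.Injective_morphisms} handles the latter, because it preserves a dense open part of $\rho(A)$. For the former, a morphism $A\to\Aut(X)$ automatically induces a morphism $A\to\Aut(X^n)$, and connectedness is preserved, so Proposition~\ref{Orbits.prop} applies to the image of $\GGG$ in $\Aut(X^n)$.
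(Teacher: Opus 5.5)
Your proof is correct and follows the paper's argument. For one inequality, Lemma~\ref{Lem.Lemma1_Ramanujam} makes an injective parametrization inject into a single orbit $\GGG z$; for the other, Proposition~\ref{Orbits.prop}(2) plus an injective reparametrization bounds $\dim\GGG y$ by $\dim\GGG$. The only minor difference is that you use Proposition~\ref{Pro.Injective_morphisms}, which gives an irreducible $B$ whose orbit through $y$ is only dense in $\GGG y$, whereas the paper uses Corollary~\ref{Cor.Existence_injective_morphism}, which gives exactly the image $\rho(A)$ but with a possibly reducible $B$.
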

Here, $\GGG y$ is the orbit obtained from the diagonal action of $\GGG$ on $X^n$. 
By Proposition~\ref{Orbits.prop}, $\GGG y$ is an irreducible and locally closed subset of~$X^n$.
  
\begin{proof}  We follow the argument of Lemma~2 in \cite{Ra1964A-note-on-automorp}.
      Suppose $A$ is irreducible
      and $A \to \Aut(X)$ is an injective morphism with image in $\GGG$.
   Take $k$ and $x:=(x_1, \ldots, x_k)$ as in Lemma~\ref{Lem.Lemma1_Ramanujam}. 
   Then, the orbit map 
   $A \to X^k$ with respect to $x$ is injective and
   thus $\dim(A) = \dim (A x) \leq \dim (\GGG x)$. 
  To conclude the proof, we need to show that $\dim(\GGG y)\leq \dim(\GGG)$ for any $n\geq 1$ and $y\in X^n$. Fix such a point $y \in X^n$. 
      By Proposition~\ref{Orbits.prop}\,(2) there is a morphism 
      $\rho\colon A\to\operatorname{Aut}(X)$ with image in $\GGG$ such that $A$ is irreducible and $\rho(A) y= \GGG y$. 
     By Corollary~\ref{Cor.Existence_injective_morphism}
      we find an injective morphism $\mu\colon B\to\operatorname{Aut}(X)$ 
      with $\mu(B) = \rho(A)$. 
      It follows that $\dim \GGG y \leq \dim B \leq \dim \GGG$.
\end{proof}

Let $X' \subseteq X$ be a locally closed subset. For instance $X'$ can be an irreducible component of $X$.
If $\rho \colon A \to \Aut(X)$ is a morphism such that 
$X'$ is invariant under $\rho(a)$ for all $a \in A$,
then we get an induced morphism $A \to \Aut(X')$. In particular, 
if $X'$ is invariant under a connected subgroup $\GGG \subseteq \Aut(X)$, then 
the image $\GGG'$ in $\Aut(X')$ is again connected. Furthermore, 
using Proposition~\ref{Prop.Alternative_definition_dimension} we can bound its dimension.

\begin{cor}
   \label{Cor.Dimension_estimate}
   Let $X$ be a variety and $\GGG \subseteq \Aut(X)$ a connected subgroup.
   If $X' \subseteq X$ is a locally closed $\GGG$-invariant subvariety, 
   then $\dim \GGG' \leq \dim \GGG$,
   where $\GGG'$ denotes the image of $\GGG$ in $\Aut(X')$. \hfill \qed
\end{cor}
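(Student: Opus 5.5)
We will deduce the estimate directly from Proposition~\ref{Prop.Alternative_definition_dimension}, applied once to $\GGG$ acting on $X$ and once to $\GGG'$ acting on $X'$. The first step is to check that the proposition applies to $\GGG'$. As explained just before the statement, every morphism $\rho\colon A\to\Aut(X)$ with image in $\GGG$ restricts to a morphism $A\to\Aut(X')$. Indeed, $X'$ is $\rho(a)$-invariant for all $a$, and $A\times X'\to X'$ is the restriction of the morphism $A\times X\to X$ to the locally closed subvariety $A\times X'$, with image in $X'$. Hence the restriction homomorphism $r\colon\GGG\to\GGG'$ sends the image of an irreducible family through $\id_X$ to the image of an irreducible family through $\id_{X'}$. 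Since $\GGG$ is connected, $\GGG'=r(\GGG)$ is connected, so Proposition~\ref{Prop.Alternative_definition_dimension} gives
\[
\dim\GGG'=\sup\{\dim \GGG' y \mid y\in X'^{\,n},\ n\geq 1\}.
\]

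The second step is to compare orbits. Take $n\geq1$ and $y\in X'^{\,n}\subseteq X^n$. For $g\in\GGG$ the action of $r(g)$ on $X'^{\,n}$ agrees with the action of $g$, so $\GGG' y=\GGG y$ as subsets of $X'^{\,n}\subseteq X^n$. Both are locally closed and irreducible by Proposition~\ref{Orbits.prop}. Dimension of a locally closed subset does not depend on whether it is viewed inside the locally closed subvariety $X'^{\,n}$ or inside $X^n$. Therefore $\dim\GGG' y=\dim\GGG y\leq\dim\GGG$, where the last inequality is Proposition~\ref{Prop.Alternative_definition_dimension} for $\GGG$ on $X$. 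Taking the supremum over $y$ and $n$ gives $\dim\GGG'\leq\dim\GGG$.

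There is no real obstacle here. The only point that needs care is the connectedness of $\GGG'$, which is required to invoke Proposition~\ref{Prop.Alternative_definition_dimension}, and this is just the restriction of families described above. The statement also makes sense when $\dim\GGG=\infty$, in which case there is nothing to prove.
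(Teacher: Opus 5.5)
Your proof is correct and follows the paper's intended route. Restricting families gives the connectedness of $\GGG'$, as noted just before the corollary. Proposition~\ref{Prop.Alternative_definition_dimension}, applied to both groups, then gives $\dim\GGG'=\sup_y\dim\GGG' y=\sup_y\dim\GGG y\leq\dim\GGG$, since $\GGG' y=\GGG y$ for $y\in X'^{\,n}$.
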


\section{The proof of Ramanujam's result for arbitrary varieties}

To prove Ramanujam's theorem for $\GGG\subset \Aut(X)$ when $X$ is reducible, we will apply the initial  theorem of Ramanujam to the restriction $\GGGr{X'}$ on each irreducible component $X'$ of $X$. This will endow $\GGGr{X'}$ with a structure of algebraic group acting algebraically on $X'$. Then we will glue these algebraic actions to get an algebraic action of $\GGG$ on $X$. 
However, one has to be careful with gluings,
as the following examples illustrate. In these examples, $(x,y)$ and  $(x,y,z)$ are affine coordinates on the plane $\bbA^2$  and the  space  $\bbA^3$, respectively.

\begin{exa}
Let $X \subseteq \bbA^2$ be defined by
$x y(x-y) =0$. 
The lines $X_1 = \{y=0\}$, $X_2 = \{x=0\}$ and 
$X_3 = \{x=y\}$ are its irreducible components; we identify each of them 
with $\bbA^1$, with origin at $(0,0)$. 
Define $g_1$ to be the identity map on $X_1$, 
$g_2$ to be the identity on $X_2$, and $g_3$ to be the multiplication by $\alpha$ on $X_3$, for some $\alpha\in \kk\setminus\{0, 1\}$.
Then $g_1$, $g_2$, and $g_3$ pairwise agree  on the intersections of their domains
but do not glue to an automorphism $g$ of $X$, since the differential of such a hypothetical $g$ at $(0,0)$ would be a linear map fixing $(0,1)$ and $(1,0)$ but not $(1,1)$. 
So, already at the level of tangent spaces 
the gluing can fail. 
\end{exa}
 
\begin{exa}
Fix an integer $m > 1$, and let $X\subseteq \bbA^3$ be the union of the plane 
$X_1 = \{y=0\}$ and  the cylinder $X_2 = \{y = x^{m+1}\}$.
Fix an element $a \neq 0, 1$ in~$\kk$. We consider the following
faithful $\Ga$-actions. On $X_1$ the action is given by $(t, x, z) \mapsto (x, z + tx^m)$,
and on $X_2$ it is given by $(t, x, z) \mapsto (x, z + t a x^m)$
using the isomorphisms $X_i \to \bbA^2$, $(x, y, z) \mapsto (x, z)$.
These actions agree on the level of tangent spaces (since $m > 1$), but they don't glue,
since on the schematic intersection $X_1 \cap X_2 \simeq \Spec(\kk[x]/(x^{m+1})[z])$ the
actions do not agree.
\end{exa}

\begin{prop}\label{prop1} Let $X$ be a variety, with $X = X_1 \cup X_2$ for some closed subsets $X_1,X_2 \subset X$.
Let $G$ be a connected algebraic group acting regularly on $X_1$ and $X_2$. This action defines a regular action on $X$ if and only if the two actions  induced  on the schematic intersection $X_1\cap X_2$ coincide. The latter holds if every $g \in G$ defines a regular automorphism of $X$.
\end{prop}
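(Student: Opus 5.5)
The plan is to treat this as a gluing statement for morphisms on a union of closed subschemes. The key algebraic input is that, for closed subsets $X_1, X_2$ with $X = X_1 \cup X_2$ (reduced structures), the square
\[
X_1 \cap X_2 \rightrightarrows X_1 \sqcup X_2 \to X
\]
is a pushout in the category of schemes. Here $X_1\cap X_2$ is the schematic intersection, defined by $I_1+I_2$. The same holds after base change with $G$: the variety $G\times X$ is the pushout of $G\times X_1$ and $G\times X_2$ along $G\times(X_1\cap X_2)$.

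Locally this is the exactness of
\[
0\to \OOO_X \to \OOO_{X_1}\times \OOO_{X_2} \to \OOO_{X_1\cap X_2}\to 0 ,
\]
i.e.\ $I_1\cap I_2 = 0$, which holds because $X$ is reduced and $X=X_1\cup X_2$. Exactness is preserved by tensoring with the flat $\kk$-algebra $\OOO(G)$ on affine opens. With this, gluing affine-locally gives the first equivalence.

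Direction ``only if''. Suppose the action on $X$ is regular. Then $G\times X\to X$ restricts to the given actions on $G\times X_i$. Both of these restrict to the same morphism $G\times(X_1\cap X_2)\to X$. It lands in $X_1\cap X_2$, since it factors through both $X_1$ and $X_2$ and the schematic intersection is their fibre product over $X$. So the induced actions on $X_1\cap X_2$ coincide.

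Direction ``if''. Suppose the two morphisms $\alpha_i\colon G\times X_i\to X_i\hookrightarrow X$ agree on $G\times(X_1\cap X_2)$. First I would check that each $\alpha_i$ maps $G\times(X_1\cap X_2)$ into $X_1\cap X_2$ schematically, which is needed for "induced action" to make sense. Then the pushout property yields a unique $\alpha\colon G\times X\to X$. The action axioms $\alpha\circ(m\times\id)=\alpha\circ(\id\times\alpha)$ and $\alpha(e,\cdot)=\id$ are identities of morphisms $G\times G\times X\to X$. They hold after restriction to $G\times G\times X_i$, and $G\times G\times X$ is reduced and covered by these closed pieces, so they hold on all of $G\times G\times X$.

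The last sentence. Assume each $g\in G$ is a regular automorphism of $X$ restricting to $\alpha_i(g,\cdot)$ on $X_i$. Then for every $k$-point $g$, the two actions of $g$ on the closed subscheme $X_1\cap X_2$ coincide, since both are restrictions of the single automorphism $g$ of $X$. The task is to pass from agreement on $k$-points of $G$ to equality of the morphisms $\beta_1,\beta_2\colon G\times(X_1\cap X_2)\to X_1\cap X_2$. This is the main obstacle, because $X_1\cap X_2$ may be non-reduced. I would use that $G$ is reduced and work locally. For an affine open of $X_1\cap X_2$ with coordinate ring $R$, compare the comorphisms
\[
\beta_i^*\colon R\to \OOO(G)\otimes R .
\]
Write the difference as $\sum_j f_j\otimes r_j$ with the $r_j$ linearly independent. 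Evaluating at each $g\in G(\kk)$ gives $\sum_j f_j(g)r_j=0$, hence $f_j(g)=0$ for all $g$. Since $G$ is reduced, $f_j=0$. The remaining point is to choose affine charts compatibly, using $G$-stable open covers or working with the orbit maps, so that this computation is legitimate. Connectedness of $G$ helps here: $G$ preserves each irreducible component and hence each $X_i$, so the maps make sense.
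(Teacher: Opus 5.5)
Your treatment of the first equivalence is fine and matches the paper. The paper proves the gluing Lemma~\ref{lem5} from the exact sequence $0\to\OOO(X)\to\OOO(X)/\aa_1\times\OOO(X)/\aa_2\to\OOO(X)/(\aa_1+\aa_2)$ and applies it to the variety $G\times X=(G\times X_1)\cup(G\times X_2)$, whose schematic intersection is $G\times(X_1\cap X_2)$. Tensoring with the flat $\kk$-algebra $\OOO(G)$, as you do, amounts to the same thing, and your check of the action axioms is a harmless addition.

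\textbf{The gap is in the last sentence, exactly at the point you leave open.}

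\emph{Why your chart argument fails.} Your comparison of comorphisms $\beta_i^*\colon R\to\OOO(G)\otimes R$ needs an affine open $\Spec R\subset X_1\cap X_2$ such that both $\beta_i$ map $G\times\Spec R$ into $\Spec R$. That is, it needs $G$-stable affine charts, and these need not exist. Take $\mathrm{PGL}_2$ acting through the first factor on $X=\mathbb{P}^1\times\{xy=0\}\subset\mathbb{P}^1\times\bbA^2$, with $X_1=\mathbb{P}^1\times\{x=0\}$ and $X_2=\mathbb{P}^1\times\{y=0\}$. Then $X_1\cap X_2\cong\mathbb{P}^1$ is a single orbit and is not affine. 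Nor can you fall back on product charts $V\times W'$ contained in $\beta_1^{-1}(W)\cap\beta_2^{-1}(W)$. The Zariski topology on $G\times(X_1\cap X_2)$ is not the product topology, so such products need not exist around a given point.

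\emph{The missing idea: compare subschemes rather than ring maps.}
\begin{enumerate}
\item Since the target is separated, the locus $Y:=(\beta_1,\beta_2)^{-1}(\Delta)$ where $\beta_1=\beta_2$ is a closed subscheme of $G\times(X_1\cap X_2)$.
\item What you already established (the two actions of each $g$ agree on $X_1\cap X_2$) says precisely that $\{g\}\times(X_1\cap X_2)\subset Y$ for every $g\in G$.
\item Whether a closed subscheme is everything is local on the \emph{source} only. So it can be tested on products $V\times W$ of affine opens, with no compatibility with the target needed.
\item On such a product, the ideal of $Y$ lies in $\bigcap_{g\in V}(\mm_g\otimes\OOO(W))=0$, where $\mm_g\subset\OOO(V)$ is the maximal ideal of $g$. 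This is exactly your $\sum_j f_j\otimes r_j$ computation, and it is the paper's Lemma~\ref{lem6}.
\end{enumerate}

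The same device settles your preliminary check that the $\alpha_i$ map $G\times(X_1\cap X_2)$ into $X_1\cap X_2$. The preimage of $X_2$ under $G\times(X_1\cap X_2)\to X_1\hookrightarrow X$ is a closed subscheme containing every $\{g\}\times(X_1\cap X_2)$, hence everything. Alternatively, you can skip that check by comparing the two maps into $X$, which is all the gluing lemma needs.
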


This will be the tool that makes the gluing process work.

\begin{lem}\label{lem5}
Let $X,Z$ be varieties, where $X = X_1 \cup X_2$ with two closed subsets $X_1,X_2 \subset X$. Assume that $\phi_i\colon X_i \to Z$, $i=1,2$, are morphisms with the property $\phi_1|_{X_1\cap X_2} = \phi_2|_{X_1\cap X_2}$ where $X_1\cap X_2$ denotes the schematic intersection. Then there is a unique morphism $\phi\colon X \to Z$ such that $\phi|_{X_i} = \phi_i$.
\end{lem}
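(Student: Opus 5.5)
The plan is to reduce to the affine case and then use the fact that $X$ is the pushout of $X_1 \leftarrow X_1\cap X_2 \rightarrow X_2$ in the category of schemes. Throughout, $X_1$, $X_2$ carry their reduced structures, and $X_1\cap X_2$ is the closed subscheme defined by the sum of the two ideal sheaves. Since $X=X_1\cup X_2$ set-theoretically and $X$ is reduced, the ideals satisfy $\mathcal I_1\cap\mathcal I_2=0$ in $\OOO_X$. This gives the exact sequence of sheaves
\[
0\to \OOO_X \to \OOO_{X_1}\times\OOO_{X_2}\to \OOO_{X_1\cap X_2}\to 0,
\]
where the second map is $(f_1,f_2)\mapsto f_1|-f_2|$. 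Exactness on the left is $\mathcal I_1\cap\mathcal I_2=0$. Exactness in the middle is the standard fact that $R\to R/I_1\times R/I_2\to R/(I_1+I_2)$ is exact: if $f_1\equiv f_2 \bmod I_1+I_2$, write $f_1-f_2=a_1+a_2$ with $a_i\in I_i$; then $f:=f_1-a_1=f_2+a_2$ restricts to both.

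\emph{Uniqueness.} This follows at once. Two morphisms $\phi,\phi'$ with the same restrictions agree set-theoretically, since $X=X_1\cup X_2$. Over an affine open $W\subset Z$ and the open $\phi^{-1}(W)$, their comorphisms $\OOO_Z(W)\to\OOO_X(\phi^{-1}(W))$ have the same compositions with the injection $\OOO_X\hookrightarrow\OOO_{X_1}\times\OOO_{X_2}$, and hence they coincide.

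\emph{Existence: continuous map.} First, define $\phi$ as a map of sets by $\phi|_{X_i}=\phi_i$. This is well defined because the two morphisms agree on the reduced intersection, which has the same underlying set. It is continuous because closed sets can be checked on the finite closed cover $\{X_1,X_2\}$.

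\emph{Existence: comorphism.} Next, cover $Z$ by affine opens $W$. For each $W$, cover $\phi^{-1}(W)$, which is open, by affine opens $U$ of $X$. On $U=\Spec R$ with $U\cap X_i=\Spec R/I_i$, we are given ring maps $\phi_i^*\colon\OOO(W)\to R/I_i$ that agree after passing to $R/(I_1+I_2)$. By the exactness above, the pair $(\phi_1^*,\phi_2^*)$ factors uniquely through $R$, giving a $\kk$-algebra homomorphism $\OOO(W)\to R$. This yields a morphism $U\to W$ restricting to $\phi_i$ on $U\cap X_i$. By the uniqueness already established, these local morphisms agree on overlaps, so they glue to the desired morphism $\phi\colon X\to Z$.

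The only real content, and the step to check carefully, is the exactness in the middle of the sequence; that is where the schematic (rather than reduced) intersection is essential, as the examples preceding Proposition~\ref{prop1} illustrate. The rest is routine gluing.
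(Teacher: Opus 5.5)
Your proof is correct and follows the paper's argument. Both rest on the exactness of $0\to\OOO(X)\to\OOO(X)/\aa_1\times\OOO(X)/\aa_2\to\OOO(X)/(\aa_1+\aa_2)$, using $\aa_1\cap\aa_2=(0)$ because $X$ is reduced. You additionally spell out the reduction to the affine case (continuity of the glued map, affine covers of $Z$ and of $\phi^{-1}(W)$, and gluing via uniqueness), which the paper leaves implicit.
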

\begin{proof}
We can reduce to the case where $X$ and $Z$ are affine. Then $\OOO(X_i) = \OOO(X)/\aa_i$ with radical ideals $\aa_i$, and $\aa_1\cap\aa_2 = (0)$. The schematic intersection $X_1\cap X_2$ is given by  
$\Spec\OOO(X)/(\aa_1+\aa_2)$. Now the exact sequence
$$
0 \to \OOO(X) \to (\OOO(X)/\aa_1) \times (\OOO(X)/\aa_2) 
\xlongrightarrow{(f_1, f_2) \mapsto f_1 - f_2} \OOO(X)/(\aa_1+\aa_2)
$$ implies the claim.
\end{proof}
\begin{lem}\label{lem6}
Let $A$ be a variety and $Z$ a $\kk$-scheme. Assume that $Y \subset A \times Z$ is a closed subscheme which contains $\{a\}\times Z$ for any $a \in A$. Then $Y = A\times Z$.
\end{lem}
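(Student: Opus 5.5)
The plan is to reduce to the affine case and then use that $A$ is a variety, hence reduced, so that a regular function on $A\times Z$ vanishing on every slice $\{a\}\times Z$ must be zero. The statement is local on $A\times Z$, so I will first cover $A$ by affine opens $A_j$ and $Z$ by affine opens $Z_l$. Equality of closed subschemes can be checked on the open cover $A_j\times Z_l$, and the hypothesis restricts to each piece because $\{a\}\times Z_l\subset Y\cap (A_j\times Z_l)$. So I may assume $A=\Spec R$ with $R$ a reduced finitely generated $\kk$-algebra and $Z=\Spec S$ with $S$ an arbitrary $\kk$-algebra. Then $Y$ corresponds to an ideal $I\subset R\otimes_\kk S$, and the goal is $I=0$.

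For each closed point $a\in A$ with maximal ideal $\mm_a$, the fiber $\{a\}\times Z$ is the closed subscheme defined by $\mm_a\otimes S$, since $(R/\mm_a)\otimes S\cong \kk\otimes S=S$ because $\kk$ is algebraically closed. The hypothesis $\{a\}\times Z\subset Y$ means exactly $I\subset \mm_a\otimes S$ for all closed points $a$. It therefore suffices to show
\[
\bigcap_{a}\bigl(\mm_a\otimes_\kk S\bigr)=0 \quad\text{in } R\otimes_\kk S .
\]

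To prove this I will choose a $\kk$-basis $(s_\beta)$ of $S$, which exists since $S$ is a vector space over a field. Every $f\in R\otimes S$ can be written uniquely as $f=\sum_\beta r_\beta\otimes s_\beta$ with finitely many nonzero $r_\beta\in R$. As $\kk$-vector spaces, $\mm_a\otimes S=\bigoplus_\beta \mm_a\otimes s_\beta$, because tensoring over a field is exact. So $f\in\mm_a\otimes S$ if and only if every $r_\beta$ lies in $\mm_a$. If this holds for all closed points $a$, then each $r_\beta$ lies in the intersection of all maximal ideals of $R$. That intersection is the nilradical, since $R$ is a Jacobson ring by the Nullstellensatz, and the nilradical is $0$ because $R$ is reduced. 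Hence $f=0$, so $I=0$ and $Y=A\times Z$.

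The main subtlety, and the only place the hypotheses on $A$ matter, is this last step. Reducedness of $A$ is essential: for non-reduced $A$, such as $\Spec\kk[t]/(t^2)$, the statement fails. Finite type over the algebraically closed field $\kk$ is used twice: to identify fibers over closed points with copies of $Z$, and to make the intersection of the maximal ideals equal to the nilradical. No hypothesis on $Z$ is needed, which is why $Z$ may be an arbitrary, possibly non-reduced, scheme; this is presumably how the lemma will be applied to the schematic intersection $X_1\cap X_2$ in Proposition~\ref{prop1}.
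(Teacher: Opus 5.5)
Your proof is correct and takes the same route as the paper. You reduce to the affine case, turn the hypothesis into $I\subset \mm\otimes_\kk \OOO(Z)$ for every maximal ideal $\mm$ of $\OOO(A)$, and conclude from $\bigcap_{\mm}(\mm\otimes\OOO(Z))=\bigl(\bigcap_{\mm}\mm\bigr)\otimes\OOO(Z)=(0)$; your choice of a $\kk$-basis of $\OOO(Z)$ supplies the justification for this interchange of intersection and tensor product, which the paper states without comment.
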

\begin{proof}
We can assume that $A$ and $Z$ are affine. Then $Y$ corresponds to an ideal $\aa \subset \OOO(A)\otimes\OOO(Z)$. By assumption, $\aa\in \mm \otimes \OOO(Z)$ for any maximal ideal $\mm \subset \OOO(A)$. Since $\bigcap_{\mm} (\mm \otimes \OOO(Z)) = \left(\bigcap_{\mm}\mm\right) \otimes \OOO(Z) = (0)$ the claim follows. 
\end{proof}

\begin{proof}[Proof of Proposition~\ref{prop1}]
Looking at the group action $G \times X \to X$, the first claim follows from  Lemma~\ref{lem5}. For the second we look at the two morphisms $\phi_1,\phi_2\colon G \times (X_1\cap X_2) \to X_1\cap X_2$ of schemes of finite type given by the actions of $G$. Then  the inverse image of the diagonal $\Delta \subset (X_1\cap X_2)\times (X_1\cap X_2)$ under the morphism $(\phi_1,\phi_2)\colon G \times (X_1\cap X_2) \to (X_1\cap X_2)\times (X_1\cap X_2)$ is the closed subscheme where $\phi_1$ and $\phi_2$ are equal. By assumption, $\phi_1$ and $\phi_2$ coincide on $\{g\}\times (X_1\cap X_2)$ for any $g \in G$; thus,  the claim follows from Lemma~\ref{lem6}. 
\end{proof}

\begin{proof}[Proof of Ramanujam's theorem for arbitrary varieties]
First, we assume that $G$ is connected. 
\ps
(a)
Let $G \subseteq \Aut(X)$ be a connected finite dimensional subgroup. Denote by $X_1, \ldots, X_n$ the irreducible components of $X$. Since $G$ is connected, every $X_i$ is $G$-invariant, and we have  a group homomorphism $p_i\colon G \to \Aut(X_i)$ for every~$i$. The image $G_i \subseteq \Aut(X_i)$ is a connected subgroup and its dimension is finite by Corollary~\ref{Cor.Dimension_estimate}. By Ramanujam's theorem for irreducible varieties, $G_i$ has a unique structure of an algebraic subgroup that satisfies the universal property~(UP)
for morphisms $A \to \Aut(X_i)$ with image in $G_i$ and irreducible~$A$.
\ps
(b)
Define $p:=(p_1,p_2,\ldots,p_n)\colon G \to G_1 \times G_2 \times\cdots\times G_n$, and 
denote by $G':=p(G) \subseteq G_1 \times \cdots \times G_n$ the image of $G$.
For every morphism $A \to \Aut(X)$ with image in $G$ and irreducible $A$,
the composition  $A \to G \to G_i$ is a morphism by the universal property (UP) of $G_i$.  As $G$ is connected, $G'$ is generated by the images of all morphisms
\[
A \xrightarrow{\rho} G \xrightarrow{p} G_1 \times \cdots \times G_n,
\]
where $A$ is irreducible,
$\rho \colon A \to \Aut(X)$ is a morphism, $\rho(A)\subset G$, and $\id_X \in \rho(A)$.
Hence, $G'$ is a connected closed algebraic subgroup of 
$G_1 \times \cdots \times G_n$ by Proposition 7.5 in \cite{Hu1975Linear-algebraic-g}.
\ps
(c) Every element of $G'$ acts via a regular automorphism on $X$. Hence, by Proposition~\ref{prop1},
the actions $G' \times X_i \to G_i \times X_i \to X_i$ 
glue inductively to a faithful action $G' \times X \to X$,  and the image  of $G'$ 
in $\Aut(X)$ is $G$.
\ps
(d) 
We give $G$ the structure of an algebraic group via $G'$. 
By the theorem 
in~\cite{Ra1964A-note-on-automorp}, the natural homomorphisms 
$\Lie(G_i) \to \Ve(X_i)$ are injective. By construction 
$p \colon G \to G_1 \times \ldots \times G_n$ is a closed embedding, 
and thus the natural homomorphism $\Lie(G) \to \Ve(X)$ is injective as well.
Lemma~\ref{Lem.Parametrization_G} gives $k \geq 1$ and $z \in X^k$ such that 
the orbit map $G \to Gz$ is an isomorphism.
If $\mu\colon B \to \Aut(X)$ is a morphism with image in $G$, then $B \to Gz$ given by 
$b \mapsto \mu(b)(z)$ is a morphism and hence $B \to G$ is a morphism as well. This proves (UP).
\ps
This concludes the proof when $G$ is connected. When $G^\circ$ has finite index in $G$, 
we can directly apply the argumentation for Theorem~9 in \cite{KrReSa2021Is-the-Affine-Spac} or 
Lemma~6.2 in~\cite{ReUrSa2025The-structure-of-a}.
Here is the argument. Choose elements $g_1=e$, $g_2$, $\ldots$, $g_m \in G$ such that $G$ is the disjoint union $G = \bigcup_i g_i G^\circ$. Left translation by $g_i$ provides a bijective map $\iota_i\colon G^\circ \simto g_iG^\circ$; using $\iota_i$, we transport the algebraic structure of $G^\circ$ on each $g_iG^\circ$; this defines a structure of algebraic variety on $G$ which does not depend on the choice of the $g_i$. Then, one checks that the multiplication $G \times G \to G$ and the inverse map $G \to G$ are morphisms. Doing so, $G$ inherits the structure of an algebraic group. It remains to see that the universal property (UP) holds. It is clear from the construction above that the action map $G \times X \to X$ is a morphism. Now assume that $\rho\colon A \to \Aut(X)$ is a morphism with image in $G$, where we can assume that $A$ is connected. Then $\rho(A) \subset g_iG^\circ$ for some $i$, and $\rho\colon A \to g_i G$ is a morphism of varieties, because the composition $(\iota_i)^{-1}\circ\rho\colon A \to G^\circ$ is a morphism of varieties.
This finishes the proof.
\end{proof}

\section{Appendix}
{\small{
Our goal  is to provide a self contained proof of the following lemma, which also follows from Proposition 5.7 of  \cite[Exposé I]{GrRa2003Revetements-etales} and Proposition~9.6.1 of \cite{Gr1966Elements-de-geomet-28}.
\begin{lem}\label{Lem.inverse-family}
Let $\rho \colon A \to \Aut(X)$ be a morphism. Then 
$
\Phi\colon A\times X \to A \times X, \quad (a,x) \mapsto (a,\rho(a)x),
$
is an isomorphism. In particular, the inverse family $\rho^{-1}\colon A \to \Aut(X)$ given by $a \mapsto \rho(a)^{-1}$, is a morphism.
\end{lem}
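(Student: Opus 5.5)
The plan is to show that $\Phi$ is a bijective morphism whose set-theoretic inverse $\Psi(a,x)=(a,\rho(a)^{-1}x)$ is again a morphism. That $\Phi$ is a morphism follows from the definition, since $A\times X\to X$, $(a,x)\mapsto\rho(a)x$ is a morphism. Bijectivity is clear because each $\rho(a)$ is bijective. The real content is that $\Psi$ is a morphism; then $\rho^{-1}$ is a morphism because $(a,x)\mapsto \rho(a)^{-1}x$ is $\mathrm{pr}_X\circ\Psi$.

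To prove that $\Psi$ is a morphism, I would use the graph. The graph $\Gamma_\Phi\subset (A\times X)\times(A\times X)$ is closed, and it is isomorphic to $A\times X$ via the first projection. The transposed graph $\Gamma_\Psi$ is also closed. Its first projection $q\colon\Gamma_\Psi\to A\times X$ is a bijective morphism, and $\Psi$ is a morphism exactly when $q$ is an isomorphism. In other words, $\Phi$ itself must be an isomorphism.

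First reduce to $A$ reduced and, working locally, affine; it also suffices to check the claim on an open cover of $A$. Then note that $\Phi$ is a morphism over $A$ (it commutes with $\mathrm{pr}_A$) and restricts to an isomorphism on every fibre $\{a\}\times X$. The key fact is then the fibrewise criterion for isomorphisms: a morphism of $A$-schemes of finite type that is bijective and is an isomorphism on every fibre is an isomorphism, provided the source is flat over $A$. Here $A\times X$ is flat over $A$, since $X$ is flat over $\kk$. This is the EGA IV statement cited in the text. To make the argument self-contained, I would argue in three steps.

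\textbf{(i) $\Phi$ is quasi-finite and proper.} Quasi-finiteness comes from bijectivity. For properness, I would check the valuative criterion via the graph: $\Phi$ is a bijection, so it is universally injective, and the closedness of images should follow from the fibrewise isomorphism together with Chevalley-type constructibility arguments over $A$. I expect this to be the subtle point. Instead, I would try to show directly that $\Phi$ is étale: by the fibrewise criterion of flatness, flatness of $A\times X$ over $A$ together with flatness of the fibre maps (isomorphisms) gives that $\Phi$ is flat. The fibres of $\Phi$ are reduced points with trivial residue extension, because the fibre of $\Phi$ over $(a,y)$ is the fibre of the isomorphism $\rho(a)$ over $y$. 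Hence $\Phi$ is unramified, so it is étale.

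\textbf{(ii) $\Phi$ is an open immersion.} An étale morphism that is universally injective is an open immersion. \textbf{(iii) $\Phi$ is an isomorphism.} Being also surjective, an open immersion is an isomorphism.

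The main obstacle is step (i), establishing flatness of $\Phi$ from the fibrewise data. This is the crossing-fibres form of the local flatness criterion, and it is precisely where the flatness of $A\times X\to A$ is used. If I want to avoid that criterion, I would first try the case of affine $X$. There, $\Phi^*$ is an $\OOO(A)$-algebra endomorphism of $\OOO(A)\otimes\OOO(X)$ that is an isomorphism modulo each maximal ideal $\mm$. I would then show it is surjective by a Nakayama argument on finitely generated pieces together with Lemma~\ref{lem6}-style intersections $\bigcap_\mm(\mm\otimes\OOO(X))=0$, and injective by the same intersection argument.
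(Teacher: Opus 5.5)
Your main argument is correct, but it follows a different route from the paper's proof. The paper avoids flatness altogether. It records that $\Phi$ is bijective, that $d\Phi_{(a,x)}$ is block-triangular with invertible diagonal blocks, and that $\Phi$ has reduced fibres, and then argues in three layers.
\begin{enumerate}[(a)]
\item For $A,X$ smooth, $\Phi$ is étale and bijective, hence an isomorphism.
\item For $A,X$ normal, $\Phi$ is birational (by the smooth case on the regular loci) and bijective, so Zariski's Main Theorem applies.
\item In general, $\Phi$ lifts to the normalizations. The second projection $\Gamma_\Phi\to A\times X$ is then finite, birational and bijective with reduced fibres, hence an isomorphism by the cited lemma.
\end{enumerate}
You instead get étaleness of $\Phi$ everywhere at once, singular points included. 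Flatness comes from the crossing-fibres criterion applied to $\OOO_{A,a}\to\OOO_{A\times X,\Phi(p)}\to\OOO_{A\times X,p}$; this is exactly where flatness of $A\times X$ over $A$ enters. Unramifiedness is read off the fibres. You then conclude with ``étale $+$ universally injective $\Rightarrow$ open immersion'' and surjectivity. This is essentially the route behind the SGA~1/EGA~IV references the paper gives before its self-contained proof. Your version buys uniformity and brevity: no case distinction, no normalization, and no need to check that $\Phi$ lifts to $\tilde A\times\tilde X$. The price is heavier commutative algebra (the local flatness criterion and openness of the flat locus), whereas the paper uses only tangent spaces, ZMT and normalization.

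There are a few points to tighten.
\begin{itemize}
\item You only know that $\Phi_a=\rho(a)$ is an isomorphism for \emph{closed} $a\in A$. So do not state the criterion for ``every fibre'': over the generic point of $A$ that is essentially what is to be proved. Instead, obtain flatness and unramifiedness at all closed points of $A\times X$. Then use that the flat locus and the complement of the support of the relative differentials of $\Phi$ are open, and an open subset of a variety containing all closed points is everything. In the paper's alternative reference, EGA~IV 9.6.1 plays this role.
\item ``Bijective, hence universally injective'' needs one line. The complement of the diagonal in the fibre product of $\Phi$ with itself is open and has no $\kk$-points, hence is empty. So the diagonal is surjective and $\Phi$ is radicial.
\item The properness attempt in (i) is unnecessary; drop it.
\item The affine fallback does not work as sketched. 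Injectivity of $\Phi^*$ via $\bigcap_\mm(\mm\otimes\OOO(X))=0$ is fine. Surjectivity, however, cannot come from Nakayama, because the cokernel of $\Phi^*$ is not a finitely generated $\OOO(A)$-module. For instance, $M=\kk(t)/\kk[t]$ satisfies $M=\mm M$ for all $\mm$ yet $M\neq 0$. You would first need finiteness of $\Phi$, which is the properness you were trying to avoid.
\end{itemize}
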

\begin{proof}
The morphism $\Phi$ has the following properties:
\begin{enumerate}[\rm (i)]
\item
For every $a\in A$ the morphism $\Phi_a\colon\{a\}\times X \to \{a\}\times X$ is an isomorphism.
\item
$\Phi$ is bijective. This follows from (i).
\item
For every $(a,x) \in A \times X$ the differential 
$
d\Phi_{(a, x)} \colon T_{(a,x)} (A\times X) \to T_{(a,\rho(a)x)} (A\times X)
$
is an isomorphism. This follows from (i), because $T_{(a,x)} (A\times X)  = T_aA \oplus T_xX$ and $d\Phi_{(a, x)}$ is ``lower triangular'' with respect to this direct sum: it acts as the identity on $T_aA$ and the composition of $d\Phi_a$ with the projection onto $T_xX$ is the invertible map 
$d(\rho(a))_x$. 
\item
$\Phi$ has reduced fibers. This follows from (i), because $\Phi^{-1}(\{a\}\times X)$ is reduced and equal to $\{a\}\times X$.
\end{enumerate}
Now we proceed as follows.
\ps
(a) If $A$ and $X$ are smooth, then $\Phi$ is smooth by (iii), hence an isomorphism by (ii).
\ps
(b) If $A$ and $X$ are normal, then the claim follows from Zariski's main theorem, because $\Phi$ is birational by (a).
\ps
(c)
Let $\eta_A\colon \tilde A \to A$, $\eta_X\colon \tilde X \to X$ be the normalizations. We get the following commutative diagram
\[
\begin{CD}
A \times X @<{\simeq}<< \Gamma_\Phi @>{\phi}>> A \times X \\
@AA{\eta_A\times\eta_X}A @AA{\eta}A @AA{\eta_A\times\eta_X}A\\
\tilde A \times \tilde X @<{\simeq}<< \Gamma_{\tilde\Phi}@>{\simeq}>> \tilde A \times \tilde X
\end{CD}
\]
where $\Gamma_\Phi:=\set{(a,x,y)}{y = \rho(a)x}$ is the graph of the $A$-morphism $\Phi$ and $\Gamma_{\tilde\Phi}$ the graph of the normalization $\tilde\Phi\colon \tilde A \times \tilde X \to \tilde A \times \tilde X$ which is an isomorphism by (b).
By construction, the vertical maps are finite and surjective hence $\phi$ is finite as well. Since $\phi$ is birational and has reduced fibers the claim follows 
(see  Lemma 3.3.3 in \cite{FuKr2018On-the-geometry-of}). 
\end{proof}
}}
 
\bibliographystyle{plain}
\bibliography{biblio-solvable}
\end{document}